\newcommand{\sP}{\mathscr{P}}
\newtheorem{thm}{Theorem}[section]
\newtheorem{cor}[thm]{Corollary}
\newtheorem{defn}[thm]{Definition}
\newtheorem{lem}[thm]{Lemma}
\newtheorem{prob}{Problem}
\newtheorem{prop}[thm]{Proposition}
\newtheorem{rem}[thm]{Remark}
\def\ni{\noindent}
\title{\sc On Disjunctive and Conjunctive Set-Labelings of Graphs}
\author{\large Naduvath Sudev}
\affil{\small Department of Mathematics\\ Vidya Academy of Science \& Technology \\ Thrissur - 680501, Kerala, India.\\ E-mail: sudevnk@gmail.com}
\date{}
\begin{document}
\maketitle

\begin{abstract}
Let $X$ be a non-empty set and $\sP(X)$ be its power set. A set-valuation or a set-labeling of a given graph $G$ is an injective function $f:V(G) \to \sP(X)$ such that the induced function $f^{\ast}:E(G) \to \sP(X)$ defined by $f^{\ast} (uv) = f(u)\ast f(v)$, where $\ast$ is a binary operation on sets. A set-indexer of a graph $G$ is an injective set-valued function $f:V(G) \to \sP(X)$ such that the induced function $f^{\ast}:E(G) \to \sP(X)$ is also injective. In this paper, two types of set-labelings, called conjunctive set-labeling and disjunctive set-labeling, of graphs are introduced and some properties and characteristics of these types of set-labelings of graphs are studied.
\end{abstract}

\ni {\small \bf Key Words}: Set-labeling of graphs; disjunctive set-labelings; conjunctive Set-labeling. 

\vspace{0.2cm}

\ni {\small \bf Mathematics Subject Classification}: 05C78.

\section{Introduction to Set-Valuations of Graphs}

For all  terms and definitions, not defined specifically in this paper, we refer to \cite{BM1,JAG,FH,DBW}. Unless mentioned otherwise, all graphs considered here are simple, finite, connected and non-trivial.

The researches on graph labeling problems attracted wide interest among researchers after the introduction of the concept of number valuations of graphs in \cite{AR1}. Motivated from the problems in social interactions in social networks, a set-analogue of number valuations of graphs, called set-valuations of graphs, has been introduced in \cite{BDA1}. Since then, a lot of researches have been taken place on both number valuations and set-valuations of graphs.

Let $X$ be a non-empty set and $\sP(X)$ be its power set. A {\em set-valuation} or {\em set-labeling} of a graph $G$ with respect to $X$ is an injective set-valued function $f:V(G)\to \sP(X)$, whose induced function $f^{\ast}(uv):E(G)\to \sP(X)$ is defined by $f^{\ast}(uv)=f(u)\ast f(v)$,where $\ast$ is a binary operation on sets. A set-labeling $f$ is said to be a {\em set- indexer} of $G$ if the induced function $f^{\ast}$ is also injective. A graph $G$ which admits a set-labeling (or set-indexer) is called an {\em set-labeled graph} (or {\em set-indexed graph}). Further fundamental and consequent studies on set-valuations and set-indexers of graphs, can be seen in \cite{BDA1,BDA2}.

The binary operation of set-labels used in \cite{BDA1}, is the symmetric difference $\oplus$ of sets. Later, the notions of integer additive set-labeling and sumset labeling of graphs have been introduced in \cite{GA,GS1,GS0}, by using sumset operation of sets instead of symmetric difference of sets, and studies on graphs which admit these types of set-labelings are appeared in subsequent literature.

The main objective of this paper is to introduce another two types of set-valuations. In which the binary operations are union and intersection of sets. In these types of set-labelings the set labels of edges are the union or intersection of the set labels of their end vertices. We also study certain structural properties of the graphs which admit these types of set-labelings.

\section{Disjunctive and Conjunctive Set-Labelings of Graphs}

\ni As a special type of set-labeling of graphs, we introduce the following notion.

\begin{defn}\label{Defn-2.1}{\rm 
A {\em disjunctive set-labeling} (DSL) of a graph $G$ with respect to a non-empty ground set $X$ is an injective set-valued function $f:V(G)\to \sP(X)$, whose induced function $f^{\cup}:E(G)\to \sP(X)$ is defined by $f^{\cup}(uv)=f(u)\cup f(v)$. A graph which admits a disjunctive set-labeling is called a {\em disjunctive set-labeled graph} (DSL-graph).}
\end{defn}

\begin{defn}\label{Defn-2.2}{\rm 
A {\em disjunctive set-valuation} or {\em disjunctive set-labeling} $f$ of a graph $G$ is said to be a {\em disjunctive set-indexer} (DSI) of a graph $G$ if the induced function $f^{\cup}$ is also injective.A graph which admits a disjunctive set-indexing is called a {\em disjunctive set-indexed graph} (DSI-graph).} 
\end{defn}

\begin{rem}{\rm 
It can be noted that a set-valuation (with respect to the symmetric difference of sets, as defined in \cite{BDA1}) of a given graph becomes a disjunctive set-valuation if the set-labels of any two adjacent vertices are mutually disjoint.}
\end{rem}  

It is to be noted that the choice of the ground set $X$ is important in defining a disjunctive set-labeling for given graphs. The most important and much interesting question in this context is whether all graphs admit disjunctive set-labelings with respect to the suitable choice of ground set $X$. The following theorem provides an answer this question.

\begin{thm}\label{Thm-2.2a}
Every graph $G$ admits a disjunctive set-indexer.
\end{thm}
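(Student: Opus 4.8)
The plan is to exploit the freedom in choosing the ground set $X$ and to exhibit an explicit labeling rather than argue abstractly. Since a disjunctive set-indexer only requires that both $f$ and the induced map $f^{\cup}$ be injective, and since we may take $X$ as large as convenient, the natural first move is to let every vertex carry its own private element. Concretely, I would enumerate $V(G)=\{v_1,v_2,\ldots,v_n\}$, take $X=\{x_1,x_2,\ldots,x_n\}$ to consist of $n$ pairwise distinct elements, and define $f(v_i)=\{x_i\}$.

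Next I would verify the two injectivity conditions in turn. Injectivity of $f$ is immediate: the labels $\{x_1\},\ldots,\{x_n\}$ are distinct singletons because the $x_i$ are distinct, so $f$ is a valid set-labeling. For the induced edge map I would compute $f^{\cup}(v_iv_j)=\{x_i\}\cup\{x_j\}=\{x_i,x_j\}$, which is a two-element subset of $X$ whenever $v_iv_j\in E(G)$; this uses $i\neq j$, which holds since $G$ is simple and loopless.

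The only step that requires an actual argument --- and hence the step I would flag as the crux --- is injectivity of $f^{\cup}$. Here I would note that an edge $v_iv_j$ is determined by the unordered pair $\{i,j\}$, and that the correspondence $\{i,j\}\mapsto\{x_i,x_j\}$ is injective precisely because the $x_i$ are pairwise distinct, so the pair $\{i,j\}$ can be recovered unambiguously from the two-element label. Thus two edges receive the same label if and only if they are the same edge, whence $f^{\cup}$ is injective and $f$ is a disjunctive set-indexer.

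I do not expect any genuine obstacle here: the whole difficulty of forcing distinct unions evaporates once all vertices are given pairwise disjoint singleton labels, since the union of two distinct singletons always recovers both of its elements. It is worth recording that this construction uses only $|X|=|V(G)|$, so the theorem already holds with a ground set whose cardinality equals the order of $G$; equivalently, one may phrase the construction as any injection of $V(G)$ into a set of atoms followed by taking singletons.
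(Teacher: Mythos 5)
Your proposal is correct and is essentially the paper's own proof: both assign each vertex a private singleton label from a ground set of size $n$, so edge labels become two-element sets from which the unordered pair of endpoints is recovered, giving injectivity of $f^{\cup}$. Your write-up is in fact slightly more careful than the paper's at the crux step, spelling out why $\{i,j\}\mapsto\{x_i,x_j\}$ is injective rather than merely asserting that distinct edges receive distinct labels.
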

\begin{proof}
Let $V=\{v_1,v_2,v_3\ldots,v_n\}$ be the vertex set of $G$. Consider a non-empty set $X=\{a_1,a_2,a_3,\ldots,a_n\}$. Now, define a function $f:V(G)\to \sP(X)$ such that $f(v_i)=\{a_i\}$, where $1\le i\le n$. Clearly, $f$ is injective. Then, the induced edge function $f^{\cup}:E(G)\to \sP(X)-\{\emptyset\}$ is given by $f^{\cup}(v_iv_j)=\{a_i,a_j\}$. For any two edges $e_r=v_iv_j$ and $e_s=v_kv_l$ in $E(G)$, we have  $f^{\cup}(e_r)=\{a_i,a_j\}\ne f^{\cup}(e_s)=\{a_k,a_l\}$. Therefore, $f^{\cup}$ is also injective. Hence, $f$ is a disjunctive set-indexer of $G$.
\end{proof}

\ni Figure \ref{fig:DSLG} illustrates a graph which admits a disjunctive set-labeling.

\begin{figure}[h!]
\centering
\includegraphics[width=0.6\linewidth]{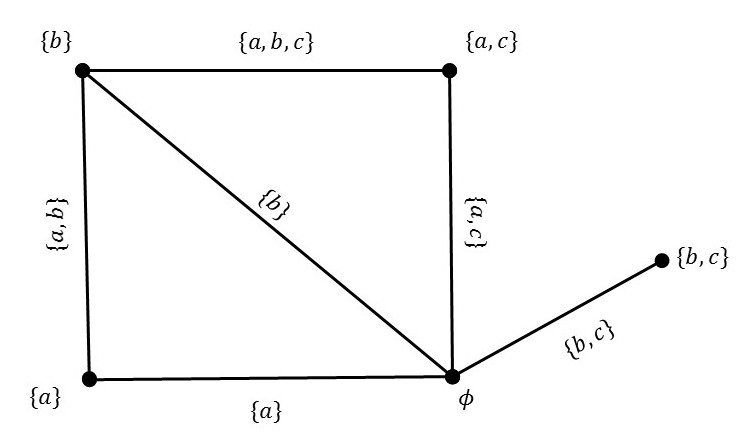}
\caption{An illustration to a DSL-Graph.}
\label{fig:DSLG}
\end{figure}

Analogous to the definition of disjunctive set-labeling of graph, the notion of a conjunctive set-labeling of a graph can be introduce as follows.

\begin{defn}\label{Defn-2.3}{\rm 
A {\em conjunctive set-labeling} (CSL) of a graph $G$ with respect to a non-empty ground set $X$ is an injective set-valued function $f:V(G)\to \sP(X)$, whose induced function $f^{\cap}(uv):E(G)\to \sP(X)$ is defined by $f^{\cap}(uv)=f(u)\cap f(v)$. A graph which admits a conjunctive set-labeling is called a {\em conjunctive set labeled graph} (CSL-graph).}
\end{defn}

\begin{defn}\label{Defn-2.4}{\rm 
A {\em conjunctive set-labeling} $f$ of a graph $G$ is said to be a {\em conjunctive set-indexer} (CSI) of a graph $G$ if the induced function $f^{\cup}$ is also injective.A graph which admits a disjunctive set-indexer is called a {\em disjunctive set-indexed graph} (CSI-graph).} 
\end{defn}

Analogous to Theorem \ref{Thm-2.2a}, the existence of conjunctive set-labeling for any given finite graph
is established in the following theorem. 

\begin{thm}\label{Thm-2.4a}
Every graph $G$ admits a conjunctive set-indexer.
\end{thm}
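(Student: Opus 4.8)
The plan is to mirror the proof of Theorem~\ref{Thm-2.2a}, replacing each singleton set-label with its complement so that intersection plays the role union played there. Concretely, writing $V(G)=\{v_1,v_2,\ldots,v_n\}$, I would fix the ground set $X=\{a_1,a_2,\ldots,a_n\}$ and define $f:V(G)\to\sP(X)$ by $f(v_i)=X\setminus\{a_i\}$ for $1\le i\le n$.

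First I would verify that $f$ is injective on vertices: if $i\ne j$, then $a_i\in X\setminus\{a_j\}$ while $a_i\notin X\setminus\{a_i\}$, so $f(v_i)\ne f(v_j)$. Next I would compute the induced edge-labels. For an edge $v_iv_j$,
\[
f^{\cap}(v_iv_j)=f(v_i)\cap f(v_j)=(X\setminus\{a_i\})\cap(X\setminus\{a_j\})=X\setminus\{a_i,a_j\}.
\]
The final step is to check injectivity of $f^{\cap}$. For two distinct edges $e_r=v_iv_j$ and $e_s=v_kv_l$, the unordered pairs $\{i,j\}$ and $\{k,l\}$ differ, hence $\{a_i,a_j\}\ne\{a_k,a_l\}$, and therefore $X\setminus\{a_i,a_j\}\ne X\setminus\{a_k,a_l\}$. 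Thus $f^{\cap}$ is injective, and $f$ is a conjunctive set-indexer of $G$.

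The one genuine subtlety to flag is why a naive construction fails: intersection collapses disjoint labels to $\emptyset$, so one cannot simply reuse the singletons as in the disjunctive case, since any two edges joining pairwise-disjoint labels would then receive the same empty set and injectivity would break. The role of complementing the singletons is precisely that the two-element ``defect'' $\{a_i,a_j\}$ is exactly recoverable from $f^{\cap}(v_iv_j)$, so each edge is determined by its label. Beyond this observation I expect no further obstacle: the argument is a formal dualization of Theorem~\ref{Thm-2.2a}, and the same bookkeeping (injectivity of the map $\{i,j\}\mapsto X\setminus\{a_i,a_j\}$ on unordered pairs) closes it.
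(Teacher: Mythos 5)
Your proposal is correct and coincides exactly with the paper's own proof: the same ground set $X=\{a_1,\ldots,a_n\}$, the same complemented-singleton labeling $f(v_i)=X\setminus\{a_i\}$, and the same injectivity check via the edge-labels $X\setminus\{a_i,a_j\}$. Your added remark about why bare singletons fail under intersection is a sensible observation, but the argument itself is precisely the paper's.
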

\begin{proof}
Let $V=\{v_1,v_2,v_3\ldots,v_n\}$ be the vertex set of $G$. Consider a non-empty set $X=\{a_1,a_2,a_3,\ldots,a_n\}$. Now, define a function $f:V(G)\to \sP(X)$ such that $f(v_i)=X-\{a_i\}$, where $1\le i\le n$. Clearly, $f$ is injective. Then, the induced edge function $f^{\cap}:E(G)\to \sP(X)-\{\emptyset\}$ is given by $f^{\cap}(v_iv_j)=X-\{a_i,a_j\}$. For any two edges $e_r=v_iv_j$ and $e_s=v_kv_l$ in $E(G)$, we have  $f^{\cap}(e_r)=X-\{a_i,a_j\}\ne f^{\cup}(e_s)=X-\{a_k,a_l\}$. Therefore, $f^{\cap}$ is also injective. Hence, $f$ is a conjunctive set-indexer of $G$.
\end{proof}

\ni Figure \ref{fig:CSLG} illustrates a graph which admits a conjunctive set-labeling.

\begin{figure}[h!]
\centering
\includegraphics[width=0.6\linewidth]{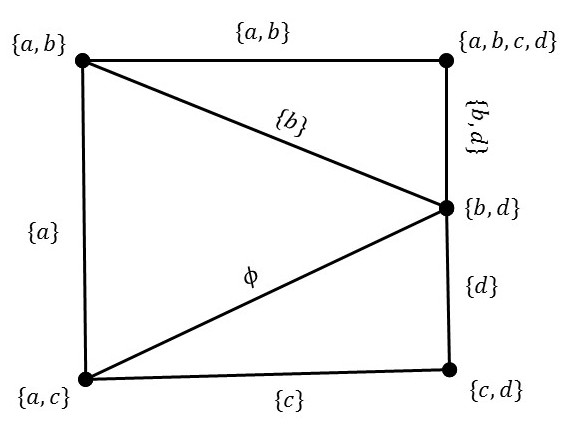}
\caption{An illustration to a CSL-Graph.}
\label{fig:CSLG}
\end{figure}

\begin{rem}\label{Rem-2.5}{\rm 
Since the set-label of every edge of a DSL-graph $G$ is the union of the set-labels of its end vertices, it can be noted that the null set $\emptyset$ will not be the set-label of any edge of $G$. }
\end{rem}

\begin{rem}\label{Rem-2.6}{\rm 
Since the set-label of every edge of a CSL-graph $G$ is the intersection of the set-labels of its end vertices, the ground set $X$ will not be the set-label of any edge of $G$.}
\end{rem}

The structural properties of DSI-graphs and CSI-graphs arise much interests. The following results discuss certain structural properties of DSI-graphs whose vertex set-labels form a topology on the ground set $X$.

\begin{lem}\label{Lem-2.7}
If a connected graph $G$ admits a disjunctive set-indexer, then $f(V(G))=f^{\cup}(E(G))\cup \{\emptyset\}$.
\end{lem}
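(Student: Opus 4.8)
The plan is to read the lemma within the context just set up, namely that the vertex set-labels $f(V(G))$ form a topology $\tau$ on the ground set $X$; this hypothesis is essential, since without it the conclusion fails (for instance, the standard disjunctive set-indexer of a triangle from Theorem~\ref{Thm-2.2a} makes $f(V(G))$ and $f^{\cup}(E(G))$ disjoint). Under the topology hypothesis the two inclusions comprising the desired equality are handled asymmetrically: one is immediate from the closure properties of $\tau$, while the other is obtained by a cardinality squeeze rather than by realizing vertex labels as edge labels directly.

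First I would dispose of the inclusion $f^{\cup}(E(G))\cup\{\emptyset\}\subseteq f(V(G))$. Since $\tau$ is a topology it is closed under finite unions, so every edge label $f^{\cup}(uv)=f(u)\cup f(v)$ lies in $\tau=f(V(G))$; and since every topology contains the empty set, $\emptyset\in f(V(G))$ as well. Invoking Remark~\ref{Rem-2.5}, no edge label equals $\emptyset$, so in fact $f^{\cup}(E(G))\subseteq f(V(G))\setminus\{\emptyset\}$.

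The heart of the argument is the reverse inclusion, where I would deliberately avoid trying to express a prescribed vertex label as a union of two adjacent labels (that would force each nonempty-labeled vertex to have a neighbor carrying a subset label, which is not evident in general). Instead I would count. As $f$ is injective with $f(V(G))=\tau$, the map $f\colon V(G)\to\tau$ is a bijection, whence $|f(V(G))\setminus\{\emptyset\}|=|\tau|-1=|V(G)|-1$. As $f$ is a disjunctive set-indexer, $f^{\cup}$ is injective, so $|f^{\cup}(E(G))|=|E(G)|$. As $G$ is connected, $|E(G)|\ge|V(G)|-1$. Stringing these together with the inclusion of the previous paragraph gives
\[|f^{\cup}(E(G))|=|E(G)|\ge|V(G)|-1=|f(V(G))\setminus\{\emptyset\}|\ge|f^{\cup}(E(G))|,\]
so all four quantities coincide. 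A finite set equal in cardinality to a set containing it must equal it, hence $f^{\cup}(E(G))=f(V(G))\setminus\{\emptyset\}$, and adjoining $\emptyset\in f(V(G))$ yields the claim.

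The main obstacle is conceptual rather than computational: the reverse inclusion resists any direct combinatorial construction, and the decisive step is to trade the statement ``every vertex label is realized by some edge'' for the counting squeeze above. A by-product worth noting is that equality forces $|E(G)|=|V(G)|-1$, so a connected disjunctive-set-indexed graph with topological vertex labels is necessarily a tree.
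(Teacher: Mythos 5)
Your proof is correct and takes essentially the same route as the paper's: the inclusion $f^{\cup}(E(G))\subseteq f(V(G))\setminus\{\emptyset\}$ via closure of the topology under unions together with Remark~\ref{Rem-2.5}, followed by the cardinality squeeze $|E(G)|\le |V(G)|-1\le |E(G)|$ using injectivity of $f$ and $f^{\cup}$ and connectedness. You were also right to read in the topology hypothesis: the lemma's statement omits it, but the paper's own proof opens by assuming $f(V(G))$ is a topology on $X$, and your triangle example correctly shows the hypothesis is indispensable.
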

\begin{proof}
Let $f$ be a DSL defined on a given graph $G$ such that $f(V(G))$ is a topology on the ground set $X$. Now, for every edge $uv\in E(G)$, $f^{\cup}(uv)=f(u)\cup f(v)$ and $f(u),f(v)\in f(V)\implies f(u)\cup f(v)\in f(V)$, since $f(V)$ is a topology on $X$. That is, $f^{\cup}(uv) \in f^{\cup}(E(G))\implies f^{\cup}(uv)\in f(V)$. That is, $f^{\cup}(E(G))\subset f(V)$. Moreover, by Remark \ref{Rem-2.5}, $\emptyset \notin f^{\cup}(E(G))$. Hence, 
\begin{eqnarray}
{\nonumber}|f^{\cup}(E(G))| & \le & |f(V(G))|-1\\ 
\implies |E(G)| & \le & |V(G)|-1. \label{Equn-1}
\end{eqnarray}
Since $G$ is a connected graph, we have 
\begin{equation}
|E(G)|\ge |V(G)|-1  \label{Equn-2}
\end{equation}
From Equation (\ref{Equn-1}) and Equation (\ref{Equn-2}), we have $|E(G)|= |V(G)|-1$. Hence, we have $f(V(G))=f^{\cup}(E(G))\cup \{\emptyset\}$.
\end{proof}

The following result for conjunctive set-labeled graphs can be proved exactly as in the the corresponding result of disjunctive set-labeled graphs.

\begin{lem}\label{Lem-2.7a}
If a connected graph $G$ admits a conjunctive set-indexer, then $f(V(G))=f^{\cap}(E(G))\cup \{X\}$.
\end{lem}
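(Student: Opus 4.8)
The plan is to mirror the proof of Lemma~\ref{Lem-2.7} almost verbatim, interchanging the roles of union and intersection and, correspondingly, replacing the excluded label $\emptyset$ by the excluded label $X$. As in that companion argument, I would work under the hypothesis that $f(V(G))$ forms a topology on the ground set $X$, so that the collection of vertex labels is closed under the relevant set operation --- here, finite intersection rather than finite union.

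First I would establish the containment $f^{\cap}(E(G))\subseteq f(V(G))$. For each edge $uv\in E(G)$ we have $f^{\cap}(uv)=f(u)\cap f(v)$ with $f(u),f(v)\in f(V(G))$; since a topology is closed under finite intersections, $f(u)\cap f(v)\in f(V(G))$, which gives the inclusion. Next, invoking Remark~\ref{Rem-2.6}, the full ground set $X$ is never realised as an edge label, so $X\notin f^{\cap}(E(G))$, whereas $X\in f(V(G))$ because a topology always contains its underlying set. Since $f$ is a conjunctive set-indexer, both $f$ and $f^{\cap}$ are injective, so $|f(V(G))|=|V(G)|$ and $|f^{\cap}(E(G))|=|E(G)|$.

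Combining these observations yields $|E(G)|=|f^{\cap}(E(G))|\le |f(V(G))|-1=|V(G)|-1$, where the $-1$ comes precisely from the omission of $X$. Since $G$ is connected we also have $|E(G)|\ge |V(G)|-1$, and the two inequalities force $|E(G)|=|V(G)|-1$. Hence $f^{\cap}(E(G))$ is a subset of $f(V(G))$ of cardinality $|V(G)|-1$ that omits $X$; as $f(V(G))$ has cardinality $|V(G)|$ and contains $X$, the only possibility is $f(V(G))=f^{\cap}(E(G))\cup\{X\}$.

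The step I expect to require the most care is the exclusion $X\notin f^{\cap}(E(G))$: one must observe that $f(u)\cap f(v)=X$ would force $f(u)=f(v)=X$, which is impossible since the injectivity of $f$ prevents two distinct vertices from carrying the same label $X$. This exclusion, together with the topology hypothesis supplying closure under intersection, is exactly what makes the counting tight and singles out the one missing label as $X$ rather than as an arbitrary set.
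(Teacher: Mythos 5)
Your proposal is correct and takes essentially the same route as the paper, which proves Lemma~\ref{Lem-2.7a} simply by mirroring Lemma~\ref{Lem-2.7}: assume the vertex labels form a topology on $X$ (the implicit hypothesis stated in the surrounding text), use closure under finite intersection to get $f^{\cap}(E(G))\subseteq f(V(G))$, exclude $X$ as an edge label via Remark~\ref{Rem-2.6}, and combine $|E(G)|\le |V(G)|-1$ with the connectedness bound $|E(G)|\ge |V(G)|-1$. You in fact supply slightly more detail than the paper does --- the injectivity argument showing $f(u)\cap f(v)=X$ would force $f(u)=f(v)=X$, and the explicit final cardinality count identifying the one missing label as $X$ --- but the underlying argument is exactly the dualized proof the author intends.
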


In view of Lemma \ref{Lem-2.7}, a disjunctive set-indexed graph, a necessary and sufficient condition for $f(V)$ to be a topology on the ground set $X$.  

\begin{thm}\label{Thm-2.8}
For a connected disjunctive set-indexed graph $G(V,E)$, the collection $f(V(G))$ is a topology on the ground set $X$ if and only if $G$ is a tree.   
\end{thm}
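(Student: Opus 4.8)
The plan is to prove the two implications separately, leaning on Lemma~\ref{Lem-2.7} for the forward direction and on an explicit construction for the reverse.

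For the forward direction (if $f(V(G))$ is a topology then $G$ is a tree), I would reproduce the counting argument already embedded in the proof of Lemma~\ref{Lem-2.7}. Since $f(V(G))$ is a topology it is closed under unions, so every edge-label $f^{\cup}(uv)=f(u)\cup f(v)$ lies in $f(V(G))$; thus $f^{\cup}(E(G))\subseteq f(V(G))$. As every topology contains $\emptyset$ while, by Remark~\ref{Rem-2.5}, $\emptyset$ is never an edge-label, we get $\emptyset\in f(V(G))\setminus f^{\cup}(E(G))$, whence $|f^{\cup}(E(G))|\le |f(V(G))|-1$. Because $f$ is injective and $f^{\cup}$ is injective (as $G$ is a DSI-graph), this reads $|E(G)|\le |V(G)|-1$. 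Combined with the connectivity bound $|E(G)|\ge |V(G)|-1$, we obtain $|E(G)|=|V(G)|-1$, and a connected graph with this edge count is a tree.

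For the reverse direction I would exhibit, for any tree $G$ on $n$ vertices, a disjunctive set-indexer whose vertex-label image is a topology; the cleanest choice is a chain topology. Take $X=\{a_1,\dots,a_{n-1}\}$ and the nested family $S_0=\emptyset\subset S_1\subset\cdots\subset S_{n-1}=X$ with $S_k=\{a_1,\dots,a_k\}$. Fix a depth-first (or breadth-first) ordering $v_0,v_1,\dots,v_{n-1}$ of the vertices rooted at $v_0$, and set $f(v_k)=S_k$. Then $f$ is injective, and $f(V(G))=\{S_0,\dots,S_{n-1}\}$ is a chain under inclusion, hence a topology on $X$ (closed under unions and intersections, and containing $\emptyset$ and $X$). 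It remains to check that $f^{\cup}$ is injective: for any edge the two labels are comparable, so $f^{\cup}(uv)$ equals the label of whichever endpoint appears later in the ordering.

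The main obstacle, and the step deserving the most care, is this last injectivity check. In the rooted ordering each non-root vertex has exactly one neighbour preceding it (its parent), so it is the ``later'' endpoint of precisely one edge; consequently the map sending an edge to its deeper endpoint is a bijection onto $\{v_1,\dots,v_{n-1}\}$, and the edge-labels $S_1,\dots,S_{n-1}$ are pairwise distinct, giving the DSI property. I would also flag that the reverse implication cannot hold verbatim for an \emph{arbitrary} DSI $f$ (for instance, labelling $P_3$ by singletons yields a DSI whose image is not a topology), so the statement is to be read as asserting the \emph{existence} of such an $f$ when $G$ is a tree, which the construction above supplies.
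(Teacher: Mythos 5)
Your proof is correct and takes essentially the same route as the paper: the forward direction is the counting argument of Lemma~\ref{Lem-2.7} ($f^{\cup}(E)\subseteq f(V)$ by closure under unions, $\emptyset\notin f^{\cup}(E)$ by Remark~\ref{Rem-2.5}, hence $|E|\le |V|-1$, matched against connectivity), and the converse uses the same chain topology $\emptyset\subset\{a_1\}\subset\cdots\subset X$ on a tree. In fact your version is more complete than the paper's: the paper assigns the chain labels to an arbitrary vertex ordering and never verifies that $f^{\cup}$ is injective (which can fail when a vertex has two lower-indexed neighbours), whereas your rooted BFS/DFS ordering, with the observation that each non-root vertex is the later endpoint of exactly one edge, supplies exactly the missing check; your remark that the converse must be read existentially (singleton labels on a path already give a DSI whose vertex-label family is not a topology) is likewise the correct reading of the statement.
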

\begin{proof}
Let $f$ be a DSI defined on a graph $G$ so that $f(V(G))$ is a topology on the ground set $X$. Then, by Lemma \ref{Lem-2.7}, we have $|f(V)|=|f^{\cup}(E)|+1$. That is, $|V|=|E|+1$. Therefore, $G$ is a tree. 

Conversely, assume that $G$ be a tree on $n$ vertices, say $V(G)=\{v_1,v_2,v_3,\ldots v_n\}$. What required is to define a DSL $f$ on $G$ such that $f(V(G))$ is a topology on $X$. For this, let $X=\{a_1,a_2,a_3,\ldots, a_{n-1}\}$. Now, define the function $f:V(G)\to \sP(X)$ such that $f(v_1)=\emptyset, f(v_2)=\{a_1\}, f(v_3)=\{a_1,a_2\}, \ldots, f(v_i)=\{a_1,a_2,a_3,\ldots, a_{i-1}\},\dots, f(v_n)=X$. Then, we have $f(V)=\{\emptyset,\{a_1\},\{a_1,a_2\},\ldots,\\ \{a_1,a_2,a_3,\ldots, a_{i-1}\}, X\}$, which is clearly a topology on $X$. This completes the proof.
\end{proof}

Figure \ref{fig:TDSLG} is an illustration to a DSI-graph, the collection of whose vertex set-labels is a topology on the ground set $X$.

\begin{figure}[h!]
\centering
\includegraphics[width=0.5\linewidth]{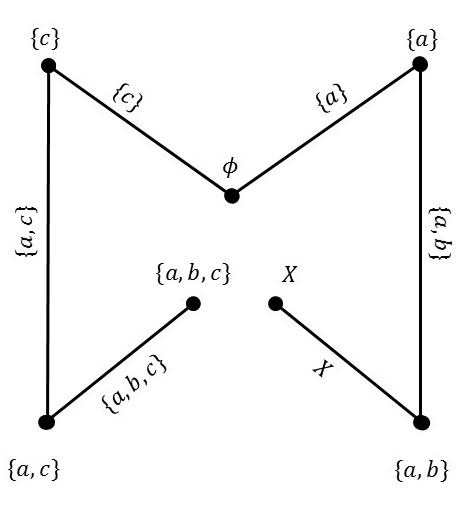}
\caption{An illustration to a CSL-Graph.}
\label{fig:TDSLG}
\end{figure}

Using the similar arguments given in the above theorem, the corresponding result for conjunctive set-labeled graphs can be established as follows.

\begin{thm}\label{Thm-2.8a}
For a connected CSI-graph $G(V,E)$, the collection $f(V(G))$ is a topology on the ground set $X$ if and only if $G$ is a tree.   
\end{thm}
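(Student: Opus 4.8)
The plan is to follow the proof of Theorem~\ref{Thm-2.8} verbatim in structure, dualising each set-theoretic ingredient: replace the union $\cup$ by the intersection $\cap$, the induced edge map $f^{\cup}$ by $f^{\cap}$, the absorbing element $\emptyset$ by the ground set $X$, and invoke Lemma~\ref{Lem-2.7a} and Remark~\ref{Rem-2.6} in place of Lemma~\ref{Lem-2.7} and Remark~\ref{Rem-2.5}.

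For the forward implication I would assume that $f$ is a CSI of the connected graph $G$ for which $f(V(G))$ is a topology on $X$. Lemma~\ref{Lem-2.7a} then yields $f(V(G))=f^{\cap}(E(G))\cup\{X\}$, and Remark~\ref{Rem-2.6} guarantees $X\notin f^{\cap}(E(G))$, so the union is disjoint and $|f(V(G))|=|f^{\cap}(E(G))|+1$. Because $f$ is a set-indexer, both $f$ and $f^{\cap}$ are injective, whence $|V(G)|=|E(G)|+1$; a connected graph satisfying this relation is a tree.

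For the converse I would exhibit a suitable CSI on a given tree $G$ with $V(G)=\{v_1,\dots,v_n\}$. Dualising the construction in Theorem~\ref{Thm-2.8}, set $X=\{a_1,a_2,\dots,a_{n-1}\}$ and assign a strictly \emph{descending} chain
\begin{equation*}
f(v_1)=X,\quad f(v_2)=\{a_1,\dots,a_{n-2}\},\ \dots,\ f(v_i)=\{a_1,\dots,a_{n-i}\},\ \dots,\ f(v_n)=\emptyset .
\end{equation*}
The image $f(V(G))$ is then a chain of nested subsets of $X$ containing both $\emptyset$ and $X$, and any finite chain of this form is closed under unions and intersections, hence is a topology on $X$. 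Since the chain is totally ordered by inclusion, the intersection of the labels of two adjacent vertices is always the smaller of the two, which again lies in $f(V(G))$, so $f^{\cap}(E(G))\subseteq f(V(G))$, as required.

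The one point demanding more than a mechanical dualisation is the injectivity of $f^{\cap}$ in the converse, i.e.\ the fact that $f$ is a genuine set-\emph{indexer} and not merely a set-labeling. To secure it I would not attach the chain to $v_1,\dots,v_n$ arbitrarily, but along a rooted (BFS or DFS) ordering of the tree, so that every vertex $v_j$ with $j\ge 2$ is adjacent to exactly one vertex of smaller index, namely its parent. Each edge then receives as its intersection-label the set assigned to its higher-indexed endpoint, and since every index $j\ge 2$ is the higher-indexed endpoint of precisely one edge, distinct edges acquire distinct labels. This is the main (and essentially the only genuine) obstacle; every other step transfers from Theorem~\ref{Thm-2.8} under the $\cup$/$\cap$ and $\emptyset$/$X$ duality.
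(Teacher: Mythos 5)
Your proof is correct and follows exactly the route the paper intends: the paper gives no separate argument for Theorem~\ref{Thm-2.8a}, remarking only that it follows by ``similar arguments'' to Theorem~\ref{Thm-2.8}, and your dualisation (swap $\cup$ for $\cap$, $\emptyset$ for $X$, and Lemma~\ref{Lem-2.7}/Remark~\ref{Rem-2.5} for Lemma~\ref{Lem-2.7a}/Remark~\ref{Rem-2.6}) is precisely that; your forward direction matches the paper's counting argument verbatim. Where you genuinely improve on the template is in the converse. The paper's proof of Theorem~\ref{Thm-2.8} attaches the nested chain to an \emph{arbitrary} enumeration $v_1,\dots,v_n$ of the tree's vertices and never checks that the induced edge function is injective --- and with a bad enumeration it can fail: in the star $K_{1,3}$, if the centre receives $\emptyset$ under your descending chain (or $X$ under the paper's ascending one), all three edges receive the same label, so one obtains a CSL (resp.\ DSL) but not the CSI (resp.\ DSI) that the theorem asserts. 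Your device of laying the chain along a rooted BFS/DFS order, so that each edge's label is the label of its unique higher-indexed endpoint and distinct edges inherit distinct labels from the injectivity of $f$, closes this gap cleanly; you correctly identified it as the one step that is not mechanical, and the same repair should be read back into the paper's own proof of Theorem~\ref{Thm-2.8}.
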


Another important property of a set-labeling is its gracefulness. This property of disjunctive set-labeling of graphs can be defined as follows.

\begin{defn}{\rm 
A disjunctive set-indexer $f$ of a graph $G$ is said to be a {\em graceful disjunctive set-labeling} (graceful DSI, in short) if $f^{\cup}(E(G))=\sP(X)-\{\emptyset\}$.}
\end{defn}

Similarly, we have the following definition of graceful conjunctive set-indexer of graphs.

\begin{defn}{\rm 
A conjunctive set-indexer $f$ of a graph $G$ is said to be a {\em graceful conjunctive set-labeling} (graceful CSI, in short) if $f^{\cup}(E(G))=\sP(X)-\{\emptyset\}$.}
\end{defn}

The following result discusses a necessary and sufficient condition for a DSI of $G$ to be a graceful DSL. 

\begin{prop}\label{Prop-2.9}
A disjunctive set-indexer of a given graph $G$ is graceful if and only if $f(V)$ is the discrete topology on the ground set $X$.
\end{prop}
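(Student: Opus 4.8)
The plan is to reduce both implications to Lemma~\ref{Lem-2.7}, which tells us that (under the standing assumption of this section that $f(V(G))$ is a topology on $X$) we have the identity $f(V(G))=f^{\cup}(E(G))\cup\{\emptyset\}$. Recall also that, by Remark~\ref{Rem-2.5}, the empty set never occurs as an edge label, so $\emptyset\notin f^{\cup}(E(G))$; hence deleting $\emptyset$ from $f(V(G))$ recovers $f^{\cup}(E(G))$ exactly, and Lemma~\ref{Lem-2.7} may be rewritten as $f^{\cup}(E(G))=f(V(G))\setminus\{\emptyset\}$. Both directions of the proposition will then be essentially one-line substitutions into this rewritten identity.

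For the forward implication I would assume that the disjunctive set-indexer $f$ is graceful, so that $f^{\cup}(E(G))=\sP(X)\setminus\{\emptyset\}$ by definition. Feeding this into the identity $f(V(G))=f^{\cup}(E(G))\cup\{\emptyset\}$ of Lemma~\ref{Lem-2.7} gives $f(V(G))=\big(\sP(X)\setminus\{\emptyset\}\big)\cup\{\emptyset\}=\sP(X)$, which is precisely the discrete topology on $X$. For the converse I would start from $f(V(G))=\sP(X)$; since the full power set is a topology, Lemma~\ref{Lem-2.7} applies, and combined with $\emptyset\notin f^{\cup}(E(G))$ from Remark~\ref{Rem-2.5} it yields $f^{\cup}(E(G))=\sP(X)\setminus\{\emptyset\}$, so $f$ is graceful.

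The delicate point, and the step I expect to be the real obstacle, is that the forward implication genuinely needs $f(V(G))$ to be closed under unions before Lemma~\ref{Lem-2.7} can be invoked. Gracefulness by itself only forces every nonempty subset of $X$ to be \emph{realized} as a union $f(u)\cup f(v)$ of two adjacent vertex labels; it does not by itself force each such subset to occur as a vertex label, since, for instance, a two-element set $\{a_i,a_j\}$ may be produced as $\{a_i\}\cup\{a_j\}$ without $\{a_i,a_j\}$ lying in $f(V(G))$. Consequently the argument must be read against the standing hypothesis that $f(V(G))$ forms a topology, and I would make this hypothesis explicit at the outset so that Lemma~\ref{Lem-2.7} is available in the forward direction. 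Once the topology assumption is in force, the genuine content of the proposition is simply the identification of \emph{which} topology gracefulness singles out, namely the discrete one, and both implications collapse to the substitutions above.
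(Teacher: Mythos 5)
Your proposal is correct and takes essentially the same route as the paper: the paper's one-line proof likewise reduces both implications to Lemma~\ref{Lem-2.7}, invoked in the cardinality form $|f(V)|=|f^{\cup}(E)|+1$ and explicitly conditioned on $f(V)$ being a topology on $X$. Your ``delicate point'' --- that gracefulness alone does not force $f(V)$ to be closed under unions, so the topology hypothesis must be assumed before Lemma~\ref{Lem-2.7} applies in the forward direction --- is precisely the tacit proviso in the paper's phrase ``if $f(V)$ is a topology on the ground set $X$'', and making it explicit is a genuine improvement in rigour rather than a departure in method.
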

\begin{proof}
The proof is an immediate consequence of the fact that $|f(V)|=|f^{\cup}(E)|+1$, if $f(V)$ is a topology on the ground set $X$.
\end{proof}

\begin{cor}\label{Cor-2.10}
A graph $G$ which admits a graceful disjunctive set-indexer has even number of vertices and odd number of edges.
\end{cor}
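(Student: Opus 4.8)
The plan is to leverage Proposition \ref{Prop-2.9}, which identifies graceful disjunctive set-indexers precisely with those DSIs whose collection of vertex set-labels forms the discrete topology on the ground set $X$. The first step is to recall that the discrete topology on $X$ is nothing other than the full power set $\sP(X)$. Hence, if $f$ is a graceful DSI of $G$, then $f(V(G))=\sP(X)$, and since $f$ is injective on the vertices, $|V(G)|=|\sP(X)|=2^{|X|}$.

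Next I would count the edges. By the definition of a graceful DSI, $f^{\cup}(E(G))=\sP(X)-\{\emptyset\}$. Because $f$ is a set-indexer, the induced edge function $f^{\cup}$ is injective, and therefore $|E(G)|=|\sP(X)-\{\emptyset\}|=2^{|X|}-1$.

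Finally, writing $k=|X|$ and observing that a non-trivial graph forces $k\ge 1$ (for otherwise $\sP(X)=\{\emptyset\}$ would yield a single isolated vertex with no edges, contrary to our standing assumption), I would conclude that $|V(G)|=2^{k}$ is even while $|E(G)|=2^{k}-1$ is odd, which is exactly the assertion. The argument is in essence a cardinality computation resting on Proposition \ref{Prop-2.9}; the only points needing care are the identification of the discrete topology with the entire power set $\sP(X)$ and the use of non-triviality to guarantee $k\ge 1$, so that $2^{k}$ is genuinely even and $2^{k}-1$ genuinely odd.
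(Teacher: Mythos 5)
Your proof is correct, but it takes a more direct route than the paper's. The paper argues: graceful $\Rightarrow$ $f(V)$ is the discrete topology (Proposition \ref{Prop-2.9}), hence a topology, hence $G$ is a tree by Theorem \ref{Thm-2.8}, giving $|E|=|V|-1$; only then does it invoke $|f(V)|=2^{|X|}$ to read off the parities. You bypass Theorem \ref{Thm-2.8} entirely: you count vertices from $f(V)=\sP(X)$ together with injectivity of $f$ (so $|V|=2^{|X|}$), and you count edges directly from the graceful condition $f^{\cup}(E)=\sP(X)-\{\emptyset\}$ together with injectivity of $f^{\cup}$ (so $|E|=2^{|X|}-1$), from which the tree relation $|E|=|V|-1$ actually falls out as a byproduct rather than being an input. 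Your version is more elementary and self-contained, needing only the definition of a graceful DSI plus Proposition \ref{Prop-2.9} for the vertex count (and for the edge count not even that); it also makes explicit the small but genuine point, left implicit in the paper, that $|X|=k\ge 1$ is needed for $2^{k}$ to be even and $2^{k}-1$ odd, which you justify from non-triviality (the non-emptiness of $X$ assumed throughout would also suffice). One incidental remark: the paper's proof says ``indiscrete topology'' where ``discrete'' is clearly intended --- your reading of Proposition \ref{Prop-2.9} as the discrete topology, i.e.\ the full power set $\sP(X)$, is the correct one.
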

\begin{proof}
If $f$ is a graceful DSL, then by Proposition \ref{Prop-2.9}, $f(V)$ is the indiscrete topology on the ground set $X$. Then, by Theorem \ref{Thm-2.8}, $G$ is a tree.  Therefore, $|E|=|V|-1$.

Moreover, since $f$ is graceful, we have $|f(V)|=|f^{\cup}(E)|+1=2^{|X|}$. Therefore, $|f(V)|=|V|$ is even and $|f^{\cup}(E)|=|E|$ is odd.
\end{proof}

The corresponding results on conjunctive set-indexers of graphs can be stated as follows.

\begin{prop}\label{Prop-2.9a}
A conjunctive set-indexer of a given graph $G$ is graceful if and only if $f(V)$ is the discrete topology on the ground set $X$.
\end{prop}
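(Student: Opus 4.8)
The plan is to transport the argument of Proposition~\ref{Prop-2.9} from the disjunctive to the conjunctive setting, interchanging $\cup$ with $\cap$ and the roles of $\emptyset$ and $X$. The gracefulness condition to be used is the conjunctive analogue $f^{\cap}(E(G))=\sP(X)\setminus\{X\}$, since by Remark~\ref{Rem-2.6} it is the ground set $X$ (not $\emptyset$) that can never be an edge label here. The engine of the proof is Lemma~\ref{Lem-2.7a}: for a connected CSI-graph whose vertex labels form a topology, $f(V(G))=f^{\cap}(E(G))\cup\{X\}$. Because $X\notin f^{\cap}(E(G))$ by Remark~\ref{Rem-2.6}, this union is disjoint, and I obtain the identity
\begin{equation*}
f^{\cap}(E(G)) = f(V(G))\setminus\{X\},
\end{equation*}
on which everything else rests.

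With this identity in hand both implications collapse into a single chain of equivalences. Since the discrete topology on $X$ is by definition $\sP(X)$, and since a topology on $X$ always contains $X$, I would argue
\begin{align*}
f \text{ graceful} \iff f^{\cap}(E(G))=\sP(X)\setminus\{X\} &\iff f(V(G))\setminus\{X\}=\sP(X)\setminus\{X\}\\
&\iff f(V(G))=\sP(X),
\end{align*}
the final step holding because $X$ belongs to both collections. The last equality is precisely the statement that $f(V(G))$ is the discrete topology, so both directions are settled simultaneously. (Equivalently, one may run the cardinality bookkeeping of Proposition~\ref{Prop-2.9}, combining $|f(V(G))|=|f^{\cap}(E(G))|+1$ with the inclusion $f^{\cap}(E(G))\subseteq\sP(X)\setminus\{X\}$ from Remark~\ref{Rem-2.6}.)

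The arithmetic is routine; the real issue is the standing requirement that $f(V(G))$ be a topology, which is exactly what licenses Lemma~\ref{Lem-2.7a} and hence the identity $f^{\cap}(E(G))=f(V(G))\setminus\{X\}$. The forward implication is the delicate one, since closure of $f(V(G))$ under intersection is what pulls every edge label back into $f(V(G))$ and leaves $X$ as the only member of $f(V(G))$ outside $f^{\cap}(E(G))$. Without this hypothesis the equivalence can fail --- gracefulness by itself need not force $f(V(G))$ to be the whole power set --- so I would state and apply the proposition within the topological framework of this section, exactly as in the disjunctive case.
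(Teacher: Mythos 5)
Your proof is correct and takes essentially the same route as the paper, which proves Proposition~\ref{Prop-2.9a} ``exactly as'' Proposition~\ref{Prop-2.9}, namely as an immediate consequence of the identity $|f(V)|=|f^{\cap}(E)|+1$ supplied by Lemma~\ref{Lem-2.7a}; your set-theoretic form $f^{\cap}(E(G))=f(V(G))\setminus\{X\}$ is just a sharper version of that same counting fact. You also rightly repair the paper's typographical definition of a graceful CSI to $f^{\cap}(E(G))=\sP(X)\setminus\{X\}$ (via Remark~\ref{Rem-2.6}) and make explicit the hypothesis, left tacit in the paper's statement, that $f(V(G))$ is a topology on $X$ --- without which, as your closing remark correctly observes, the forward implication genuinely fails.
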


\begin{cor}\label{Cor-2.10a}
A graph $G$ which admits a graceful disjunctive set-indexer has even number of vertices and odd number of edges.
\end{cor}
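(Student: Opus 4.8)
The plan is to turn the statement into a pure cardinality count by chaining together the graceful-DSI characterization of Proposition~\ref{Prop-2.9} and the tree characterization of Theorem~\ref{Thm-2.8}. First I would record that, since $f$ is a graceful disjunctive set-indexer, Proposition~\ref{Prop-2.9} identifies $f(V)$ as the discrete topology on the ground set $X$; that is, $f(V)=\sP(X)$, so that $|f(V)|=2^{|X|}$. Because $f$ is injective on $V(G)$, this immediately yields $|V(G)|=2^{|X|}$.

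Next, because $f(V)$ is a topology on $X$ and $G$ is connected, Theorem~\ref{Thm-2.8} forces $G$ to be a tree. Hence $|E(G)|=|V(G)|-1=2^{|X|}-1$. Since every graph considered here is non-trivial and connected, $G$ has at least two vertices, so $2^{|X|}\ge 2$ and therefore $|X|\ge 1$. With $|X|\ge 1$ the quantity $2^{|X|}$ is even, whence $|V(G)|=2^{|X|}$ is even and $|E(G)|=2^{|X|}-1$ is odd, which is exactly the assertion.

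An equally short route, which I would keep as a cross-check, bypasses the topology entirely: gracefulness means $f^{\cup}(E(G))=\sP(X)-\{\emptyset\}$, so by injectivity of $f^{\cup}$ (part of being a DSI) we obtain $|E(G)|=2^{|X|}-1$ directly (odd), and then the tree relation $|V(G)|=|E(G)|+1=2^{|X|}$ supplies the even vertex count.

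The only genuine pitfall here is bookkeeping rather than mathematics: one must read the topology in Proposition~\ref{Prop-2.9} as the \emph{discrete} topology $\sP(X)$ and not the indiscrete one $\{\emptyset,X\}$, since gracefulness precisely demands that \emph{every} non-empty subset of $X$ occur as an edge-label. Once the discrete topology is correctly invoked, the two cardinality equalities $|V(G)|=2^{|X|}$ and $|E(G)|=2^{|X|}-1$ follow mechanically and settle the parity claims.
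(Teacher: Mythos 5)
Your proposal is correct and follows essentially the same route as the paper: Proposition~\ref{Prop-2.9} gives $f(V)=\sP(X)$, Theorem~\ref{Thm-2.8} gives that $G$ is a tree, and the counts $|V|=2^{|X|}$ and $|E|=2^{|X|}-1$ settle the parities (the paper proves this for Corollary~\ref{Cor-2.10} and invokes the same argument verbatim here). Your bookkeeping remark is well taken, and in fact sharper than the source: the paper's own proof text mistakenly writes ``indiscrete topology'' where the \emph{discrete} topology $\sP(X)$ is meant, exactly the pitfall you identify, and your explicit observation that $|X|\ge 1$ makes $2^{|X|}$ even is a detail the paper leaves implicit.
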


The proofs of above two results follow in an exactly similar way as that of the corresponding results on DSI-graphs. 

The choice of the ground set plays an important role in defining a DSL and a CSL for given graph classes. Hence, we introduce the following notions.

\begin{defn}{\rm 
The minimum cardinality of the ground set $X$ required for a given graph $G$ to admit a DSI is called the \textit{disjunctive set-indexing number} of that graph and is denoted by $\varrho(G)$.}
\end{defn}

Similarly, we can define the notion of the conjunctive set-indexing number of given graphs as follows. 

\begin{defn}{\rm 
The minimum cardinality of the ground set $X$ required for a given graph $G$ to admit a CSI is called the \textit{conjunctive set-indexing number} of that graph and is denoted by $\varpi(G)$.}
\end{defn}  

\ni The following theorem determines certain bounds for the disjunctive set-indexing number of a graph.

\begin{thm}
The disjunctive set-indexing number of a graph $G$ on $n$ vertices is  $\varrho=\lceil \log_2n\rceil$.
\end{thm}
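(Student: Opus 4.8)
The plan is to establish the two inequalities $\varrho(G)\ge \lceil \log_2 n\rceil$ and $\varrho(G)\le \lceil \log_2 n\rceil$ separately, the first by a counting argument and the second by an explicit construction. For the lower bound I would argue as follows. Any disjunctive set-indexer $f$ is, in particular, injective on $V(G)$, so it embeds the $n$-element set $V(G)$ into $\sP(X)$. Hence $n=|V(G)|\le |\sP(X)|=2^{|X|}$, which forces $|X|\ge \log_2 n$; since $|X|$ is an integer, $|X|\ge \lceil \log_2 n\rceil$. As this holds for every admissible ground set, we obtain $\varrho(G)\ge \lceil \log_2 n\rceil$.

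For the upper bound I would exhibit a ground set of size $k:=\lceil \log_2 n\rceil$ together with a suitable labeling. Write $X=\{a_1,a_2,\ldots,a_k\}$ and identify each subset of $X$ with its characteristic vector in $\{0,1\}^k$. Since $2^k\ge n$, there are at least $n$ distinct available subsets, so I would assign to the $n$ vertices $n$ pairwise distinct subsets of $X$; this immediately guarantees that $f$ is injective on $V(G)$. The remaining task is to choose these $n$ subsets so that the induced edge map $f^{\cup}$ is \emph{also} injective, that is, so that the unions $f(u)\cup f(v)$ are pairwise distinct over all edges $uv\in E(G)$.

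The \textbf{main obstacle}, and the real crux of the argument, is precisely this last requirement. Unlike the vertex condition, which is a pure counting statement, the edge condition is sensitive to the adjacency structure of $G$: two distinct edges can produce the same union even when all four end-vertices carry distinct labels, since a single common element added to both pairs collapses their unions. I would therefore need to show that among the $2^k\ge n$ available subsets one can always select $n$ whose pairwise unions, restricted to $E(G)$, stay distinct. This is most delicate for dense graphs, where $|E(G)|$ is close to $\binom{n}{2}$ while only $2^k-1$ distinct nonempty unions are available (by Remark \ref{Rem-2.5}); verifying that the selection succeeds for \emph{every} graph on $n$ vertices using a ground set of exactly $\lceil \log_2 n\rceil$ elements is the step I expect to demand the most care, and the point at which the counting bound and the injectivity of $f^{\cup}$ must be reconciled.
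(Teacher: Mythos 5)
Your lower bound is sound and complete: injectivity of $f$ on $V(G)$ gives $n\le 2^{|X|}$, hence $|X|\ge\lceil\log_2 n\rceil$ for every admissible ground set, so $\varrho(G)\ge\lceil\log_2 n\rceil$. But your upper bound is not a proof, and the obstacle you flag is not merely delicate --- it is fatal. A disjunctive set-indexer needs $|E(G)|\le 2^{|X|}-1$, since edge labels are pairwise distinct nonempty subsets of $X$ (an edge label $f(u)\cup f(v)=\emptyset$ would force $f(u)=f(v)=\emptyset$, contradicting injectivity of $f$; cf.\ Remark~\ref{Rem-2.5}). Now take $G=K_4$, so $n=4$ and $k=\lceil\log_2 4\rceil=2$: only $2^2-1=3$ edge labels are available for $6$ edges, so $\varrho(K_4)\ge 3>\lceil\log_2 4\rceil$. (In fact $\varrho(K_4)=3$: with $X=\{1,2,3\}$ label the vertices $\emptyset,\{1\},\{2\},\{3\}$; the six unions $\{1\},\{2\},\{3\},\{1,2\},\{1,3\},\{2,3\}$ are distinct.) So no selection of $n$ subsets of a $\lceil\log_2 n\rceil$-set can work for every graph, and the statement you were asked to prove is false for dense graphs; the most your methods yield is $\varrho(G)\ge\max\bigl\{\lceil\log_2 n\rceil,\ \lceil\log_2(|E(G)|+1)\rceil\bigr\}$, with equality attainable for some sparse graphs (e.g., the path $P_4$ with labels $\{1\},\emptyset,\{2\},\{1,2\}$), though even there an explicit labeling must be exhibited.

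It is worth noting that the paper's own proof does not close this gap either: it starts from the hypothesis $2^{|X|-1}<n\le 2^{|X|}$ --- that is, it already assumes $|X|$ is the least integer with $n\le 2^{|X|}$ --- and then rearranges that inequality into $|X|\le\lceil\log_2 n\rceil$, which is circular. At no point does it verify that the induced function $f^{\cup}$ can be made injective on so small a ground set, and the only construction in the paper (Theorem~\ref{Thm-2.2a}) uses $|X|=n$. So the step you isolated as ``the real crux'' is exactly the missing ingredient in the published argument, and since it fails on $K_4$, no amount of care in the subset selection can rescue the claimed equality for all graphs on $n$ vertices.
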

\begin{proof}
Let $G$ be a graph on $n$ vertices which admits a disjunctive set-indexer $f$ with respect to the ground set $X$. Then, we have 
\begin{eqnarray*}
& 2^{|X|-1}< n\le 2^{|X|}\\
\implies & |X|-1 < \lfloor \log_2 n \rfloor \le |X|\\
\implies & \lfloor \log_2 n \rfloor \le |X| \le 1+\lfloor \log_2 n \rfloor\\
\implies & |X|\le \lceil \log_2 n \rceil.
\end{eqnarray*} 
This completes the proof.
\end{proof}

In a similar way, we can also determine the conjunctive set-indexing number of a graph as follows.

\begin{thm}
The conjunctive set-indexing number of a graph $G$ on $n$ vertices is  $\varpi=\lceil \log_2n\rceil$.
\end{thm}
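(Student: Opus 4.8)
The plan is to obtain the conjunctive result from the disjunctive one via the De Morgan duality between union and intersection, supplying the matching lower bound by a direct counting argument. This exactly parallels the phrase ``in a similar way'' preceding the statement, but the cleanest route is not to redo the construction: it is to transport the preceding (disjunctive) theorem through complementation.

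First I would establish the lower bound $\varpi(G)\ge\lceil\log_2 n\rceil$. Any conjunctive set-indexer $f:V(G)\to\sP(X)$ is, in particular, injective on the $n$ vertices of $G$, so the codomain must contain at least $n$ distinct subsets; that is, $2^{|X|}=|\sP(X)|\ge n$. Taking base-$2$ logarithms and using that $|X|$ is an integer yields $|X|\ge\lceil\log_2 n\rceil$, and since this holds for every admissible ground set, $\varpi(G)\ge\lceil\log_2 n\rceil$.

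For the reverse inequality I would pass through complementation. Given any disjunctive set-indexer $f$ of $G$ on a ground set $X$, define $g:V(G)\to\sP(X)$ by $g(v)=X\setminus f(v)$. By De Morgan's law, $g(u)\cap g(v)=(X\setminus f(u))\cap(X\setminus f(v))=X\setminus(f(u)\cup f(v))$, so $g^{\cap}(uv)=X\setminus f^{\cup}(uv)$. Since $A\mapsto X\setminus A$ is a bijection of $\sP(X)$, injectivity of $f$ transfers to injectivity of $g$, and injectivity of $f^{\cup}$ transfers to injectivity of $g^{\cap}$. Hence $g$ is a conjunctive set-indexer of $G$ on the very same ground set $X$, which gives $\varpi(G)\le\varrho(G)$. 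The map $f\mapsto g$ is in fact an involution, so $\varpi(G)=\varrho(G)$, and the preceding disjunctive set-indexing theorem supplies $\varrho(G)=\lceil\log_2 n\rceil$.

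The counting bound and the verification that complementation preserves both kinds of injectivity are routine. The step I expect to carry the real weight is the appeal to the disjunctive set-indexing theorem for the upper bound: the duality reduces the existence of an economical conjunctive indexer to the existence of an economical disjunctive one, so the entire content of the upper bound is inherited from the disjunctive result rather than argued afresh. Consequently, the principal subtlety of the whole statement lives upstream, in whatever construction underlies that earlier theorem, and the conjunctive case adds nothing harder than a single application of De Morgan.
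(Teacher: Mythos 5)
Your proof is correct and takes a genuinely different route from the paper's. The paper disposes of this theorem with the single line ``the proof follows exactly as in the previous theorem,'' i.e., it reruns the same inequality chain with $f^{\cap}$ in place of $f^{\cup}$; you instead transport the disjunctive theorem through the complementation map $A\mapsto X\setminus A$ on $\sP(X)$. Your De Morgan computation $g^{\cap}(uv)=X\setminus f^{\cup}(uv)$ is right, complementation preserves injectivity of both the vertex function and the induced edge function, and since the map is an involution you obtain the structural identity $\varpi(G)=\varrho(G)$ for \emph{every} graph --- strictly more than the paper records, since it makes the conjunctive theorem literally equivalent to the disjunctive one rather than merely analogous. (Notably, the paper's own conjunctive existence construction, $f(v_i)=X-\{a_i\}$, is exactly the complement of its disjunctive construction $f(v_i)=\{a_i\}$, so your duality is implicit in the paper but never exploited there.) Your lower bound via $2^{|X|}\ge n$ is also cleaner than the paper's, whose chain beginning $2^{|X|-1}<n\le 2^{|X|}$ quietly assumes minimality of $X$ and in any case only derives the one-sided conclusion $|X|\le\lceil\log_2 n\rceil$. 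One caution, which you yourself flag: both your argument and the paper's lean entirely on the upstream disjunctive theorem for the upper bound, and that theorem's proof never actually exhibits a disjunctive set-\emph{indexer} on a ground set of size $\lceil\log_2 n\rceil$; injectivity of $f^{\cup}$ at that size can genuinely fail --- for $K_4$ one has $\lceil\log_2 4\rceil=2$, but the $6$ edges would need distinct labels among the at most $3$ available nonempty subsets of a two-element set. This defect is inherited equally by the paper's one-line proof, so it is not a gap specific to your proposal, and your closing remark that the real burden lives upstream in the disjunctive construction is exactly right.
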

\begin{proof}
The proof follows exactly as in the previous theorem.
\end{proof}

\section{Scope for Further Studies}

In this paper, the notions of two types of set-labeling of graphs have been introduced and certain properties and characteristics of graphs which admit these types of set-labels have been discussed. More problems in this area are still open. Some of the open problems we have identified in this area are the following.


A disjunctive (or conjunctive) set-indexer $f$ of a graph $G$ is called \textit{sequential} if $f(V)\cup f^{\cup}(E)=\sP(X)$ and a conjunctive set-indexer $f$ of a graph $G$ is called \textit{sequential} if $f(V)\cup f^{\cap}(E)=\sP(X)$. 

\begin{prob}{\rm
Characterise the graphs which admit sequential disjunctive set-indexer (or sequential conjunctive set-indexer).}
\end{prob}


A disjunctive set-indexer $f$ of a graph $G$ is called \textit{topogenic} if $f(V)\cup f^{\cup}(E)$ is a topology on the ground set $X$ and a conjunctive set-indexer $f$ of a graph $G$ is called \textit{topogenic} if $f(V)\cup f^{\cap}(E)$ is a topology on the ground set $X$

\begin{prob}{\rm
Characterise the graphs which admit topogenic disjunctive set-indexer (or topogenic conjunctive set-indexer).}
\end{prob}

A set-labeling of a graph $G$ is called a \textit{set-magic labeling} (\cite{BDA1},\cite{BDA2}) if the set-label of a vertex is the union of the set-labels of its end vertices. Then, 

\begin{prob}{\rm 
Determine the conditions required for a conjunctive set-labeling of a graph to be a magic set-labeling of $G$. }
\end{prob}
Determining the disjunctive and conjunctive set-indexing numbers of various graph classes, graph operations, graph products and graph powers seems to be much promising for further investigation.

More properties and characteristics of various set-labeled graphs are yet to be investigated. The problems of establishing the necessary and sufficient conditions for various graphs and graph classes to have certain other types of set valuations are also open. All these facts highlight a wide scope for future studies in this area.

\section*{Acknowledgement}

The author of this article dedicates this paper to the memory Prof. (Dr.) D. Balakrishnan, Founder Academic Director, Vidya Academy of Science and Technology, Thrissur, India., who had been the mentor, the philosopher and the real role model for many teachers, including the author, in both teaching and research.

\end{document}